\title{On the existence of parallel one forms}
\author{L\'aszl\'o Kozma and   S. G. Elgendi }
\address{L\'aszl\'o Kozma, Department of Geometry, Institute of Mathematics, University of Debrecen,
	H-4002 Debrecen, P. O. Box 400,  Hungary} \email{kozma@unideb.hu}
\urladdr{www.math.unideb.hu/en/dr-laszlo-kozma}
\address{Salah G. Elgendi, Department of Mathematics, Faculty of Science, Benha
	University, Egypt} \email{salah.ali@fsci.bu.edu.eg, \, salahelgendi@yahoo.com}
\urladdr{http://www.bu.edu.eg/staff/salahali7}
\keywords{Parallel one form;  Killing vector field; Spray; metrizability freedom.}
\subjclass[2020]{53C60, 53B40, 58B20.}
\thanks{}
\def\blue#1{\textcolor[rgb]{0.0,0.0,1.0}{#1}}
\newcommand{\T}{{\mathcal T}}
\newcommand{\C}{{\mathcal C}}
\newcommand{\Real}{\mathbb R}
\newcommand{\tm}{\T M}
\newcommand{\N}{\mathcal{N}}
\newcommand{\TM}{\mathcal T\hspace{-1pt}M}
\def\hol{{\mathcal H_{S}}}
\def\H{{{\mathcal D}_{\mathcal H}}}
\def\pa{\partial}
\def\paa{\dot{\partial}}
\def\+{\!+\!}
\def\={\!=\!}
\def\<{\!<\!}
\def\>{\!>\!}
\let\oldmarginpar\marginpar
\renewcommand\marginpar[1]{\oldmarginpar[\raggedleft\footnotesize #1]%
  {\blue{\raggedright \footnotesize \fbox{
      \begin{minipage}{1.0\linewidth}
        #1
      \end{minipage}
}}}}
\numberwithin{equation}{section} 
\numberwithin{figure}{section} 
\theoremstyle{plain}
\newtheorem*{theorem*}{Theorem}
\newtheorem{theorem}{Theorem}[section]
\newtheorem{lemma}[theorem]{Lemma}
\newtheorem{proposition}[theorem]{Proposition}
\newtheorem{corollary}[theorem]{Corollary}
\theoremstyle{definition}
\newtheorem{definition}[theorem]{Definition}
\theoremstyle{remark}
\newtheorem{example}{Example}
\newtheorem{remark}[theorem]{Remark}
\newtheorem*{acknowledgement*}{Acknowledgement}
\begin{document}

\maketitle

\begin{abstract}
  In this paper,  using the Finslerian settings, we study  the existence of parallel one forms (or, equivalently parallel vector fields) on a Riemannian manifold. We show that a parallel one form on a Riemannian manifold $M$ is  a holonomy invariant function on the tangent bundle $TM$ with respect to the geodesic spray.  We prove that   if  the metrizability freedom of the geodesic spray of $(M,F)$ is  $1$, then the $(M,F)$ does not admit a  parallel one form.  We investigate   a sufficient  condition  on a Riemannian manifold to admit a parallel one form.  As by-product, we relate the existence of a proper  affine Killing vector field by the metrizability freedom.  We establish sufficient   conditions for the existence of a parallel one form on a Finsler manifold.   By counter-examples, we show that if the metrizability freedom is greater than 1, then the manifold (Riemannian or Finslerian) does not necessarily admit a parallel one form.  Various special cases and examples are studied and discussed. 
 \end{abstract}

\section{Introduction}

The parallel vector fields or parallel one forms have many applications not only in Riemannian and Finslerian geometries but also in physics especially in general relativity.  For example, if  the metric $g$ satisfies Einstein equations  and admits a non-trivial parallel vector field, then the energy- momentum tensor is identically zero (cf. \cite{Mahara}).

Let $M$ be a Riemannian manifold. Since the Levi-Civita connection is metrical, the associated  one form of a parallel vector field on $M$ is parallel and vice versa. So one can say that the concepts of parallel one form and parallel vector field on a Riemannian manifold are equivalent.

In the theory of $(\alpha,\beta)$-metrics, if $\beta$ is parallel with respect to Levi-Civita connection, then many interesting results can be obtained. For example,  $(\alpha,\beta)$-metric and the Riemannain metric $\alpha$ have the same geodesic spray, which will be quadratic in that case, and hence the $(\alpha,\beta)$-metric is Berwaldian. Moreover, the Cartan connection of $(\alpha,\beta)$ and the Levi-Civita connection coincide if and only if $\beta$ is parallel (cf. \cite{Shibata} ).

The existence of parallel vector fields on a Riemannnian manifold has been studied by many authors, for example we refer to \cite{Percell}. Depending on these studies, one can see that the existence of such vector fields is equivalent to some topological restrictions on the manifold.

\bigskip

In this paper, we use the Finslerian setting to study the existence of a parallel  form  (one form) on a Riemannian manifold $M$ and a Finsler manifold as well.
The Finslerian settings, in this topic, are much easier and  interesting than the topological ones. Moreover, it gives some interesting geometric properties and many examples can be considered and studied.
 We show that a parallel one form is a holonomy invariant function on $TM$ with respect to the geodesic spray. If a Riemannian manifold admits a parallel one form then the metrizability freedom of the geodesic spray is greater than one.    Or  equivalently, if the metrizability freedom of the geodesic spray of a Riemannian manifold is $1$ then the manifold does not admit a parallel one form.
  We prove that a sufficient  condition  for a Riemannian manifold $(M,F)$ to admit a parallel one form  is
 $$  R^\mu _{hij}=0$$
 for some indices $\mu$, where $R^\mu _{hij}$ are the components of the Riemannian curvature.
 As an application, we show that if a Riemannian  manifold admits a proper affine Killing vector field then the  metrizability  freedom of its geodesic spray   is greater than $1$. Also, when  $M$ is a two dimensional Riemannian manifold  with the geodesic spray $S$ of non-vanishing curvature, then the $M$ does not admit a parallel one form.

For the Finslerian case, we provide  sufficient conditions for a Finsler manifold $(M,F)$ to admit a parallel one form, namely, 
$$R^\mu _{ij}=0, \quad G^\mu_{ijk}=0\ (\equiv G^\mu_{jk}=G^\mu_{jk}(x)).$$
for some indices $\mu$, where $R^\mu _{ij}$ are the components of the curvature of the geodesic spray and $G^\mu_{ijk} $ are the components of the Berwald curvature. By counter-examples, we show that if the metrizability freedom is greater than 1, then the manifold (Riemannian or Finslerian) does not necessarily admit a parallel one form (cf. Examples 3 and 4).

\section{Preliminaries}

Let $M$ be an $n$-dimensional manifold and $(TM,\pi_M,M)$ be its tangent bundle
and $(\T M,\pi,M)$ the subbundle of nonzero tangent vectors.  We denote by
$(x^i) $ local coordinates on the base manifold $M$ and by $(x^i, y^i)$ the
induced coordinates on $TM$.  The vector $1$-form $J$ on $TM$ defined,
locally, by $J = \frac{\partial}{\partial y^i} \otimes dx^i$ is called the
natural almost-tangent structure of $T M$. The vertical vector field
$\C=y^i\frac{\partial}{\partial y^i}$ on $TM$ is called the canonical or the
Liouville vector field.

A vector field $S\in \mathfrak{X}(\T M)$ is called a spray if $JS = \C$ and
$[\C, S] = S$. Locally, a spray can be expressed as follows
\begin{equation}
  \label{eq:spray}
  S = y^i \frac{\partial}{\partial x^i} - 2G^i\frac{\partial}{\partial y^i},
\end{equation}
where the \emph{spray coefficients} $G^i=G^i(x,y)$ are $2$-homogeneous
functions in   $y$.

A nonlinear connection is defined by an $n$-dimensional distribution $H : u \in \tm \rightarrow H_u\subset T_u(\tm)$ that is supplementary to the vertical distribution, which means that for all $u \in \tm$, we have
$T_u(\tm) = H_u(\tm) \oplus V_u(\tm).$

Every spray S induces a canonical nonlinear connection through the corresponding horizontal and vertical projectors,
\begin{equation}
  \label{projectors}
    h=\frac{1}{2}  (Id + [J,S]), \,\,\,\,            v=\frac{1}{2}(Id - [J,S])
\end{equation}
Equivalently, the canonical nonlinear connection induced by a spray can be expressed in terms of an almost product structure  $\Gamma = [J,S] = h - v$. With respect to the induced nonlinear connection, a spray $S$ is horizontal, which means that $S = hS$. Locally, the two projectors $h$ and $v$ can be expressed as follows
$$h=\frac{\delta}{\delta x^i}\otimes dx^i, \quad\quad v=\frac{\partial}{\partial y^i}\otimes \delta y^i,$$
$$\frac{\delta}{\delta x^i}=\frac{\partial}{\partial x^i}-G^j_i(x,y)\frac{\partial}{\partial y^j},\quad \delta y^i=dy^i+G^j_i(x,y)dx^i, \quad G^j_i(x,y)=\frac{\partial G^j}{\partial y^i}.$$

The Nijenhuis torsion of $h$ measuring the integrability of the horizontal
distribution
\begin{displaymath}
  R=\frac{1}{2}[h,h]=\frac{1}{2}R^i_{jk}\frac{\partial}{\partial
    y^i}\otimes dx^j \wedge dx^k,
\end{displaymath}
\begin{equation}
\label{R12}
R^i_{jk} =
  \frac{\delta
    G^i_j}{\delta x^k} - \frac{\delta G^i_k}{\delta x^j}
\end{equation}
is called the curvature of $S$.

The coefficients of Berwald connection aregiven by
$$G^h_{ij}:=\frac{\partial G^h_i}{\partial y^j}.$$
For a Riemannian manifold $M$ with geodesic spray $S$, the Levi-Civita connection coincides with the Berwald connection.

 Also, the h-curvature tensor of Berwald connection is given by
\begin{equation}
\label{R13}
R^h_{ijk}=\frac{ \delta G^h_{ij}}{\delta x^k}-\frac{ \delta G^h_{ik}}{\delta x^j}+G^h_{mk}G^m_{ij}-G^h_{mj}G^m_{ik}.
\end{equation}
The curvature of the geodesic spray and the curvature tensor are related by
$$R^h_{jk}=y^i R^h_{ijk}.$$

 \begin{definition}[\cite{MZ_ELQ}]
  The \emph{holonomy distribution}, denoted by $\H$, of a given spray $S$ is the distribution on
  $TM$ generated by the horizontal vector fields and their successive
  Lie-brackets, namely,
  \begin{equation}
    \label{eq:14}
    \H := \Bigl\langle \mathfrak X^h(TM)  \Bigr \rangle_{Lie}  \!
    =\Big\{[X_1,[\dots [X_{m-1},X_m]...]] \ \big| \ X_i
    \in \mathfrak X^h(TM) \Big\}
  \end{equation}
  where $\mathfrak X^h(TM)$ is the module of the horizontal vector fields.
\end{definition}
The parallel translation of a vector along a curve  is defined through the horizontal lift as follows:
 \begin{definition}
 Let $\gamma:[0,1] \rightarrow M$ be a curve on $M$ such that $\gamma(0)=p$ and $\gamma(1)=q$. Let $\gamma^{h}(0)=v, \gamma^{h}(1)=w$ where $\gamma^{h}$ is the horizontal lift of the curve $\gamma$ on $TM$, that is, $\pi \circ \gamma^{h}=\gamma$, $\dot{\gamma}^{h}(t) \in H_{{\gamma}^{h}(t)}TM$. That is, the parallel translation $\tau: T_{p} M \rightarrow T_{q} M$ along $\gamma$ is   $\tau(v)=w$.
\end{definition}

\begin{definition}
  A function $E \in C^\infty(TM)$ is called \emph{holonomy
    invariant} with respect to a spray $S$, if it is invariant with respect to the parallel translation, that
  is, for any $v\in TM$ and for any parallel translation $\tau$ we have
  $E(\tau(v))=E(v)$.
  Therefore, $E \in C^\infty(TM)$ is a
  holonomy invariant function if and only if we have $\mathcal{L}_X E=0$, $X \in
  \H $ that is
  \begin{equation}
  	\label{eq:hol}
  	\mathcal H_{S} =
  	\left\{
  	E \in C^\infty(\TM) \ | \ \mathcal{L}_X E=0, \ X \in \H
  	\right\},
  \end{equation}
where $\mathcal H_{S}$ is the set of holonomy invariant functions with respect to $S$.
\end{definition}

\begin{definition}
  A given spray $S$ on a manifold $M$ is called \emph{Finsler metrizable} if there
  exists a Finsler function $F$ such that the geodesic spray of the Finsler
  manifold $(M,F)$ is $S$. So one can can say that $S$ is metrizable if $\hol$ contains a $1$-homogeneous regular element.
\end{definition}

   \begin{definition}\cite{Mu-Elgendi}
  \label{def:v}
   Let $S$ be a metrizable spray, then its \emph{metrizability freedom} is $\mu_S (\in \mathbb N)$ where $\mu_S={rank}\, (\hol)$.  If $S$ is
  non-metrizable, then  $\mu_S=0$.
\end{definition}
The metrizability freedom $\mu_S={rank}\, (\hol)$ means that $\hol$ is
locally generated by  $\mu_S$ functionally independent  elements.  That is, if the metrizability freedom is greater than one then we have essential different holonomy invariant functions and homogeneous of degree one.

\section{Parallel  vector fields (or 1-forms) on Riemannian manifolds}

Let $(M,g)$ be a Riemannian manifold equipped with the Levi-Civita connection $\nabla$.

\begin{definition}
A vector field $X$ on $M$ is called \textit{parallel}  with respect to the Levi-Civita connection $\nabla$ if and only if
$$\nabla_YX=0, \quad \forall \, Y\in\mathfrak{X}(M).$$
For a local coordinate system $(x^i)$ on $M$, the vector field $X=X^i\frac{\partial}{\partial x^i}$ is parallel if and only if
$$X^i_{|j}=\frac{\partial X^i}{\partial x^j} +X^m G^i_{mj}=0.$$
\end{definition}

\begin{definition}\label{nul.Riem.} Let  ${R}$ be the curvature tensor of the Levi-Civita connection.
The \textnormal{nullity space} of  ${R}$ at a point $x\in M$ is the subspace of $T_xM$ defined by
$${\N}_{R}(x):=\{X\in T_xM| \,\,  {R}_x(X,Y)=0, \, \,\text{for all}\,\, Y\in T_xM\}.$$
The dimension of ${\N}_{R}(x)$, denoted by  ${\mu}_{{R}}(x)$, is the  \textnormal{nullity index} of ${R}$ at $x$.
If the  nullity index ${\mu}_{R}$ is constant,
then the map ${\mathcal{N}}_{R}:x\mapsto {\N}_{R}(x) $ defines a distribution $\N_{R}$ of
rank ${\mu}_{R}$, called the \textnormal{nullity distribution} of ${R}$.
Any  smooth section in the nullity distribution $\N_{R}$ is called  \textnormal{a nullity vector field}.
We denote by $\Gamma({\N}_{R})$ the $C^\infty(M)$-module of the nullity vector fields.

Locally a vector field $X=X^i\pa_i \in {\N}_{R}(x)$ if and only if
$$X^mR^h_{ijm}=0.$$
 \end{definition}

Similarly, we define the kernel  of the curvature $R$, as follows:
\begin{equation}\label{ker.Riem}
{Ker}_{R}(x):=\{X\in T_x(M)| \,\,  {R}_x(Y,Z)X=0, \, \,\text{for all}\,\, Y, Z\in T_xM\}.
\end{equation}
We denote by $\Gamma({Ker}_{R})$ the $C^\infty(M)$-module of the kernel vector fields.

We have the following lemma.
\begin{lemma}
The nullity space and kernel space of the Riemannian curvature coincide at each point $x\in M$.
\end{lemma}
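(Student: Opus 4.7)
The plan is to rewrite both characterizations in terms of the fully covariant Riemann tensor and then to use its pair-exchange symmetry.

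First, I would lower the upper index: set $R_{\ell ijk} := g_{\ell h}\, R^{h}_{ijk}$. By Definition~\ref{nul.Riem.} the nullity condition becomes $X^m R_{\ell ijm}=0$ for all $\ell, i, j$, i.e.\ $X$ contracted into the last slot of the covariant curvature vanishes with the other three indices free. Unwinding the abstract condition \eqref{ker.Riem} into local coordinates (using $R(\partial_j,\partial_k)X = X^i R^h_{ijk}\partial_h$) gives the kernel condition in the form $X^i R_{\ell ijk}=0$ for all $\ell, j, k$, i.e.\ $X$ contracted into the second slot vanishes with the remaining indices free. Nondegeneracy of $g$ makes these local reformulations equivalent to the original pointwise ones.

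Next, I would invoke the Riemannian pair-exchange symmetry $R_{\ell ijk}=R_{jk\ell i}$. Substituting into the nullity identity yields
\[
X^{m}R_{\ell ijm} \;=\; X^{m}R_{jm\ell i},
\]
so the condition $X^{m}R_{jm\ell i}=0$ for all $\ell,i,j$ is, after the innocuous relabeling $(j,m,\ell,i)\mapsto(\ell,i,j,k)$, exactly the kernel condition $X^{i}R_{\ell ijk}=0$ for all $\ell,j,k$. Every step (lowering the index, applying pair-exchange, relabeling free indices) is reversible, so $\N_R(x)$ and $\mathrm{Ker}_R(x)$ agree at each $x\in M$.

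There is essentially no technical obstacle here; the only conceptual point worth flagging is that pair-exchange is a genuinely Riemannian feature, since it rests on metric compatibility of the Levi-Civita connection together with the first Bianchi identity. In a purely affine or Finslerian setting this symmetry can fail and the nullity and kernel subspaces need not agree, so the lemma really uses the Riemannian hypothesis on $(M,g)$.
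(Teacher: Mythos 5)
Your proof is correct, but it packages the argument differently from the paper. The paper proves the two inclusions separately: $\N_R\subset Ker_R$ follows directly from the first Bianchi identity $R(X,Y)Z+R(Y,Z)X+R(Z,X)Y=0$ (plugging in a nullity vector kills two of the three terms), and $Ker_R\subset\N_R$ follows from the pair symmetry $g(R(Y,Z)X,W)=g(R(X,W)Y,Z)$ together with nondegeneracy of $g$. You instead lower the index once and invoke the pair-exchange symmetry to identify the two contraction conditions in a single reversible step. The two routes rest on the same algebraic facts --- pair exchange is itself a consequence of the first Bianchi identity plus the antisymmetries coming from metric compatibility --- so neither is more general, but each buys something: yours is shorter and treats the two inclusions symmetrically, while the paper's version makes visible that the inclusion $\N_R\subset Ker_R$ needs only the Bianchi identity and hence holds for any torsion-free affine connection, with the metric entering only in the reverse inclusion (a point your closing remark about the Finslerian setting gestures at but does not isolate). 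One small bookkeeping caveat: with the paper's index convention ($R^h_{ijk}$ with $h$ the output slot, $i$ the acted-on vector, and $j,k$ the two-form arguments), the pair symmetry in coordinates reads $R_{\ell ijk}=R_{kji\ell}$; your form $R_{\ell ijk}=R_{jk\ell i}$ is equivalent to it only after applying both antisymmetries, which you should say explicitly so the relabeling at the end is airtight.
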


\begin{proof}
By plugging    $X\in \Gamma({\N}_{R})$ into the following Bianchi's identity
$$R(X,Y)Z+R(Y,Z)X+R(Z,X)Y=0,$$
we get  $R(Y,Z)X=0$ which means that $X\in\Gamma({Ker}_{R})$ and hence
$${\N}_{R}\subset {Ker}_{R}.$$
Conversely, let $X\in\Gamma({Ker}_{R})$, then
$$g(R(Y,Z)X,W)=0,$$
where $g$ is the Riemannian metric.  But, by using the properties of the curvature $R$, we have
$$g(R(Y,Z)X,W)=g(R(X,W)Y,Z)=0, \,\,  \forall Z \in \mathfrak{X}(M). $$
But $g$ is non-degenerate, therefore, $R(X,W)Y=0, \, \forall \, Y \in \mathfrak{X}(M)$. Thus, $X\in\Gamma({\N}_{R})$ and ${Ker}_{R}\subset {\N}_{R}$. This completes the proof.
\end{proof}

\begin{proposition}
The set of  parallel vector fields  is a subspace of the nullity space at each point of $M$. That is, any parallel vector field $X=X^i\frac{\partial}{\partial x^i}$ necessarily satisfies $X^mR^h_{ijm}=0$.
\end{proposition}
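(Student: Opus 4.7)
The plan is to combine a one-line coordinate-free computation with the preceding lemma identifying the kernel and nullity spaces. First, since $X$ is parallel we have $\nabla X=0$ identically on $M$, hence every iterated covariant derivative of $X$ vanishes as well. Substituting $X$ into the defining identity of the Riemann curvature,
\[
R(Y,Z)X=\nabla_Y\nabla_Z X-\nabla_Z\nabla_Y X-\nabla_{[Y,Z]}X,
\]
therefore gives $R(Y,Z)X=0$ for every $Y,Z\in\mathfrak X(M)$. Pointwise this places $X$ in $\text{Ker}_R$, and the preceding lemma, which identifies $\text{Ker}_R$ with $\mathcal N_R$ in the Riemannian setting, then yields $X\in\Gamma(\mathcal N_R)$. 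Because $\mathcal N_R(x)$ is a linear subspace of $T_xM$ at each point and parallel vector fields form a vector space, parallel vector fields are a subspace of the nullity space at each point of $M$.

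To establish the explicit local identity $X^m R^h_{ijm}=0$ printed in the statement, I would repeat the same computation in components. From $X^i_{|j}=0$ one differentiates a second time and invokes the Ricci commutation identity for the Berwald (equivalently, Levi-Civita) connection, obtaining
\[
0=X^h_{|j|k}-X^h_{|k|j}=X^m R^h_{mjk}.
\]
This is precisely the ``kernel'' condition $R(Y,Z)X=0$ written in the paper's index convention (the contraction falls on the first lower slot of $R^h_{ijk}$). Invoking the preceding lemma then converts this to the nullity form $X^m R^h_{ijm}=0$.

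The essential content is the single observation that vanishing covariant derivatives force the curvature operator to annihilate $X$, so there is no serious obstacle. The only point requiring care is that the paper's conventions for ``kernel'' and ``nullity'' correspond to contracting $X$ on different lower slots of $R^h_{ijk}$, which is precisely why one must appeal to the preceding lemma---rather than perform direct index gymnastics with the first Bianchi identity---to bridge the two forms.
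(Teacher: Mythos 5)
Your proposal is correct and follows essentially the same route as the paper: the vanishing of $\nabla X$ forces $R(Y,Z)X=0$ directly from the definition of the curvature tensor, so $X\in{Ker}_R$, and the preceding lemma identifying ${Ker}_R$ with $\mathcal N_R$ finishes the argument. Your additional coordinate computation via the Ricci identity is a harmless (and slightly more explicit) restatement of the same step.
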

\begin{proof}
It is enough to prove that each parallel vector field is a nullity vector.
Let $Z$ be a parallel vector field. Then, by using the definition of the curvature tensor
$$R(X,Y)Z=\nabla_X\nabla_YZ-\nabla_Y\nabla_XZ-\nabla_{[X,Y]}Z=0.$$
So $Z$ is a kernel vector and hence a nullity vector.
\end{proof}

One can ask,   is there any nullity vector field which is parallel?  A nullity vector field is not necessarily parallel. We have the following counter-example.
\begin{example}
Let $M=\{(x^1,x^2,x^3,x^4)\in \Real^4: x^2,x^3>0\}$. Consider the Riemannian metric
$$F=\sqrt{x^2x^3(y^1)^2+(y^2)^2+(y^3)^2+(y^4)^2}.$$
The geodesic spray coefficients are given by
 $$G^1=\frac{y^1(y^2x^3+x^2y^3)}{2x^2x^3},\quad G^2=-\frac{1}{4}x^3(y^1)^2,\quad G^3=-\frac{1}{4}x^2(y^1)^2,\quad G^4=0. $$
 Straightforward calculations lead to the non-zero coefficients of the Levi-Civita connection
 $$G^2_{11}=-\frac{1}{2}x^3,\quad G^3_{11}=-\frac{1}{2}x^2,\quad G^1_{12}=\frac{1}{2x^2},\quad G^1_{13}=\frac{1}{2x^3}. $$
 The non-zero components of the curvature tensor are given by
 $$R^1_{212}=-\frac{1}{4 (x^2)^2}, \quad R^1_{312}=\frac{1}{4 x^2x^3}, \quad R^2_{112}=\frac{x^3}{4 x^2}, \quad R^3_{112}=-\frac{1}{4},$$
  $$R^1_{213}=\frac{1}{4  x^2x^3}, \quad R^1_{313}=-\frac{1}{4 (x^3)^2}, \quad R^2_{113}=-\frac{ 1}{4 }, \quad R^3_{112}=\frac{x^2}{4 x^3}.$$
Solving the system $X^m R^h_{m jk}=0$, then any nullity vector field $X$ can be calculated on the form
 $$X=s\left(\frac{\partial}{\partial x^2}-\frac{\partial}{\partial x^3}\right)+t\frac{\partial}{\partial x^4}, \quad s, t \in \Real.$$
 So, the nullity space is spanned by
  $$\mathcal{N}_R(x)=Span\left\lbrace\frac{\partial}{\partial x^2}-\frac{\partial}{\partial x^3},\frac{\partial}{\partial x^4}\right\rbrace.$$
   Therefore the index of nullity is $2$.

 On the other hand, by solving the system $Y^i_{|j}=0$ then a parallel vector field $Y$ can be calculated as follows
 $$Y=t\frac{\partial}{\partial x^4},\quad t\in \Real$$
 and hence the space of parallel vector fields spanned by $\frac{\partial}{\partial x^4}$ and its dimension is $1$.
\end{example}

 Another example shows that the space of parallel vector fields is a proper subspace of the nullity space.

\begin{example}
Let
\begin{equation*}
 F=\frac{\sqrt{1-|a|^2}}{(1+\langle a,x\rangle)^2}\sqrt{|y|^2-\frac{2\langle a,y \rangle\langle x,y \rangle }{1+\langle a,x\rangle}-\frac{(1-|x|^2)\langle a,y \rangle^2 }{1+\langle a,x\rangle}},
\end{equation*}
where $y\in T_xB^n=\mathbb{R}^n$, $a=(a_1,a_2,...,a_n)\in \mathbb{R}^n$ is a constant vector with $|a|<1$, $|.|$ and $\langle .,.\rangle$ are the standard Euclidean norm and inner product in $\Real^n$.
 The spray coefficients are given by
 $$G^i=-\frac{\langle a,y \rangle}{1+\langle a,x \rangle}y^i. $$
 Straightforward calculations lead to the following.
$$G^i_j=-\frac{y^ia^\ell \delta_{\ell j}+\langle a,y\rangle\delta^i_j}{1+\langle a,x\rangle}, \quad
G^i_{jk}=-\frac{a^\ell \delta_{\ell j}\delta^i_k+a^\ell \delta_{\ell k}\delta^i_j}{1+\langle a,x\rangle},$$
$$\partial_{h}G^i_{jk}=\frac{(a^\ell \delta_{\ell j}\delta^i_k+a^\ell \delta_{\ell k}\delta^i_j)a^m \delta_{m h}}{(1+\langle a,x\rangle)^2}, \quad
{R}^{\,\,i}_{h\,\,jk}=0.$$

 By the help of the Finsler package \cite{CFG} and Maple program one can see that the vector $X=X^i\frac{\partial }{\partial x^i}$, where
 $$X^1=\frac{1}{a^1}(c^1(1+a^1+a^1 x^1)-\langle a,c\rangle)(1+\langle a,x\rangle), \quad X^\mu =(1+\langle a,x\rangle)(c^\mu+c^1x^\mu),  \,\mu=2,...,n$$
 is a parallel vector field, moreover, the space of parallel vectors has dimension  $n$.

Also the associated parallel one form is given by  $\beta=b_i(x)y^i$, where
$$b_1(x)=\frac{c+c_\mu x^\mu}{(1+\langle a,x\rangle)^2},\quad b_\mu(x)=\frac{a_\mu b_1}{a_1}-\frac{c_\mu(1+\langle a,x\rangle)}{a_1(1+\langle a,x\rangle)^2},$$
where  $\mu =2,...,n$.
\end{example}

\begin{remark}
Since the the Levi-Civita covariant derivative of the  Riemannian metric is zero, then one can see that there is a one-to-one correspondence between the parallel vector fields and the parallel one forms on a Riemannian manifold $M$.
\end{remark}

From now on, we use the notations $$\pa_i:=\frac{\partial }{\partial x^i}, \quad \paa_i:=\frac{\partial }{\partial y^i}.$$
 \begin{theorem}\label{Equiv_Riem.}
 Let $(M,\alpha)$ be a Riemannian manifold and its geodesic spray is $S$ and let $\beta=b_i(x)y^i$ be a one form on $M$  such that $b_i$ is gradient.  Then the following assertions are equivalent;

 \begin{description}
 \item[(a)] $\beta$ is parallel one form with respect to $\alpha$.

  \item[(b)]$d_S \beta=0$, $S$ the geodesic spray of $\alpha$.

 \item[(c)]$d_h \beta=0$, $h$ is the  horizontal projector associated with $S$.
\end{description}
\end{theorem}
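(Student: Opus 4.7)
The plan is to work in local coordinates using that for a Riemannian geodesic spray $S$ one has $G^i=\tfrac12\gamma^i_{jk}(x)y^jy^k$, hence $G^i_j=\gamma^i_{jk}y^k$ and $G^i_{jk}=\gamma^i_{jk}$, so the covariant derivative is $b_{i;j}=\partial_j b_i-\gamma^m_{ij}b_m$. Condition (a) is then $b_{i;j}=0$ for all $i,j$. The strategy is to rewrite $d_S\beta$ and $d_h\beta$ explicitly as expressions involving $b_{i;j}$, and then exploit the chain (a)$\Rightarrow$(c)$\Rightarrow$(b)$\Rightarrow$(a), with the gradient hypothesis entering only in the last step.

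First I would compute $d_S\beta=S(\beta)$ directly: applying $S=y^i\pa_i-2G^i\paa_i$ to $\beta=b_i(x)y^i$ and using $2G^jb_j=\gamma^m_{jk}b_m y^jy^k$, I obtain
\begin{equation*}
d_S\beta = y^iy^j(\partial_j b_i-\gamma^m_{ji}b_m)=y^iy^j\, b_{i;j}.
\end{equation*}
Next, since $d_h$ on a function equals $i_h\circ d$, and $h(\pa_i)=\delta/\delta x^i$, $h(\paa_i)=0$, a direct computation gives
\begin{equation*}
d_h\beta = (\delta_i\beta)\,dx^i = y^j\,b_{j;i}\,dx^i.
\end{equation*}

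From these two formulas the implications (a)$\Rightarrow$(c)$\Rightarrow$(b) are immediate: if $b_{i;j}=0$ then both expressions vanish, and since $S=hS=y^i\,\delta/\delta x^i$ we have $d_S\beta=(d_h\beta)(S)$, so (c) forces (b). The main obstacle is the direction (b)$\Rightarrow$(a), where $d_S\beta=0$ only yields $y^iy^jb_{i;j}=0$ for every $y$, i.e.\ the symmetric part vanishes,
\begin{equation*}
b_{i;j}+b_{j;i}=0,
\end{equation*}
which is merely the Killing condition and not parallelism. To bridge this gap I invoke the gradient hypothesis: if $b_i=\partial_i f$ for some $f\in C^\infty(M)$, then $\partial_j b_i=\partial_i\partial_j f=\partial_i b_j$, which makes $b_{i;j}=\partial_j b_i-\gamma^m_{ij}b_m$ symmetric in $(i,j)$. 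Combining symmetry with the antisymmetry forced by the Killing equation gives $b_{i;j}=0$, closing the cycle and establishing (a). This completes the proof, and I would close by remarking that dropping the gradient assumption would reduce the equivalence (a)$\Leftrightarrow$(b) to the well-known correspondence between $d_S\beta=0$ and $\beta$ being Killing.
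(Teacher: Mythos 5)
Your proof is correct and follows essentially the same local-coordinate computation as the paper, which runs the cycle (a)$\Rightarrow$(b)$\Rightarrow$(c)$\Rightarrow$(a) and invokes the gradient hypothesis when differentiating $y^iy^j(\partial_ib_j-G^r_{ij}b_r)$ in $y$; you merely reverse the cycle and package the same use of the gradient hypothesis as ``symmetric plus antisymmetric implies zero'' in the step (b)$\Rightarrow$(a). Both arguments rest on the identical formulas for $d_S\beta$ and $d_h\beta$, so the difference is organizational rather than substantive.
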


\begin{proof}
The proof will be proceeded locally. Let $ \beta$ be a  one form on $M$  and $b_i$ is gradient.

\noindent $(a)\Longrightarrow (b)$ Let $ \beta$ be a parallel with respect to $\alpha$, i.e, $\partial_ib_j-G^r_{ij}b_r=0$. Then, we have
\begin{eqnarray*}
S\cdot \beta &=& y^i\partial_i\beta-2G^ib_i\\
&=&y^iy^j\partial_ib_j-y^iy^jG^r_{ij}b_r\\
&=&y^iy^j(\partial_ib_j-G^r_{ij}b_r)\\
&=&0 .
\end{eqnarray*}

\noindent $(b)\Longrightarrow (c)$ Assume that $d_S \beta=0$, then by taking into account the fact that $b_i$ is gradient, differentiating $y^iy^j(\partial_ib_j-G^r_{ij}b_r)$ with respect to $y^k$ yields $2y^j(\partial_kb_j-G^r_{kj}b_r)$. Hence, we have
\begin{eqnarray*}
d_h \beta &=&\partial_i\beta-N^r_{i}b_r\\
 &=&y^j\partial_ib_j-y^jG^r_{ij}b_r\\
&=&y^j(\partial_ib_j-G^r_{ij}b_r)\\
&=&0.
\end{eqnarray*}
\noindent $(c)\Longrightarrow (a)$ Suppose that $d_h \beta=0$.  Then, we get
\begin{eqnarray*}
d_h \beta = 0&\Longrightarrow& \partial_i\beta-N_i^rb_r=0\\
&\Longrightarrow &y^r\partial_ib_r-y^rG^j_{ir}b_j=0\\
&\Longrightarrow &y^r(\partial_ib_r-G^j_{ir}b_j)=0\\
&\Longrightarrow &\partial_ib_j-G^r_{ij}b_r=0,
\end{eqnarray*}
where we applied differentiation  w.r.t $y^j$. Hence, $\beta$ is parallel with respect to $\alpha$.
\end{proof}

\begin{corollary}
Let $(M,\alpha)$ be a Riemannian manifold and its geodesic spray is $S$ and let $\beta=b_i(x)y^i$ be a one form on $M$ with such that $b_i$ is gradient. The one form $\beta$ is parallel with respect to $\alpha$ if and only if it is a holonomy invariant function with respect to $S$.
\end{corollary}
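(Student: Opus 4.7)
The plan is to reduce the corollary to Theorem \ref{Equiv_Riem.} by translating the condition $d_h\beta=0$ into holonomy invariance. First, I would invoke part (c) of Theorem \ref{Equiv_Riem.}: $\beta$ is parallel with respect to $\alpha$ iff $d_h\beta=0$. Since $\beta=b_i(x)y^i$ is a smooth function on $TM$ (linear in $y$), it makes sense to view it as an element of $C^\infty(TM)$ and test whether it is holonomy invariant in the sense of \eqref{eq:hol}.

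Next, I would spell out that $d_h\beta=0$ is equivalent to $\mathcal{L}_{X^h}\beta=0$ for every horizontal vector field $X^h\in\mathfrak{X}^h(TM)$. Indeed, in local coordinates the horizontal derivation acts as $\frac{\delta}{\delta x^i}\beta=\partial_i\beta-N^r_i b_r$, which is precisely the coordinate expression of $d_h\beta$ used in the proof of Theorem \ref{Equiv_Riem.}. So $d_h\beta=0$ iff $\frac{\delta}{\delta x^i}\beta=0$ for every $i$, iff $\mathcal{L}_{X^h}\beta=0$ for every horizontal $X^h$.

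The remaining step is to extend this from horizontal vector fields to all of $\H$. By definition \eqref{eq:14}, $\H$ is spanned over $\mathbb R$ by iterated Lie brackets of horizontal vector fields. For a scalar function, the identity $\mathcal{L}_{[X,Y]}\beta=\mathcal{L}_X\mathcal{L}_Y\beta-\mathcal{L}_Y\mathcal{L}_X\beta$ shows that if $\mathcal{L}_X\beta=\mathcal{L}_Y\beta=0$ then $\mathcal{L}_{[X,Y]}\beta=0$. Inductively, $\mathcal{L}_Z\beta=0$ for every $Z\in\H$, which by \eqref{eq:hol} means $\beta\in\mathcal H_S$. Conversely, if $\beta$ is holonomy invariant then in particular $\mathcal{L}_{X^h}\beta=0$ for every horizontal vector field, giving $d_h\beta=0$ and hence, by Theorem \ref{Equiv_Riem.}, parallelism of $\beta$.

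There is no real obstacle here; the only thing one must be careful about is that $\beta$, usually thought of as a one-form on $M$, is being regarded as a function on $TM$ via $\beta(x,y)=b_i(x)y^i$, and that under this identification the horizontal differential $d_h$ of the one-form coincides with the horizontal Lie derivative of the associated function. Once this identification is made, the corollary is essentially a direct rewording of the equivalence $(a)\Leftrightarrow(c)$ of Theorem \ref{Equiv_Riem.} combined with the definition \eqref{eq:hol} of $\mathcal H_S$.
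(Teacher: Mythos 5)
Your proposal is correct and matches the route the paper intends: the corollary is stated there without proof as an immediate consequence of the equivalence $(a)\Leftrightarrow(c)$ in Theorem \ref{Equiv_Riem.} together with the characterization \eqref{eq:hol} of holonomy invariance, and your argument simply fills in the (correct) details, in particular the standard observation that a function annihilated by all horizontal vector fields is also annihilated by their iterated Lie brackets and hence by all of $\H$.
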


\begin{proposition}\label{alpha_beta_indep.}
Let $(M,\alpha)$ be a Riemannian manifold  and  $\beta$ be a one form on $M$. Then the metrics $\alpha$ and $F=\alpha\phi(s), \ s:=\frac{\beta}{\alpha}$ are functionally independent, where $\phi$ is a positive, smooth and non-constant function on $\Real$.
\end{proposition}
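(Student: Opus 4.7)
The strategy is to compute $dF$ directly by the chain rule and reduce the functional independence of $\alpha$ and $F$ to that of $\alpha$ and $\beta$, which is an essentially linear-algebraic fact about the pair (Riemannian norm, one-form).

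Writing $s=\beta/\alpha$ and using $\paa_i s = (\paa_i\beta - s\,\paa_i\alpha)/\alpha$ together with the analogous formula for $\pa_i s$, one differentiates $F = \alpha\,\phi(s)$ to obtain
\begin{equation*}
dF = \bigl(\phi(s) - s\,\phi'(s)\bigr)\,d\alpha \,+\, \phi'(s)\,d\beta.
\end{equation*}
Wedging with $d\alpha$ kills the first term, leaving
\begin{equation*}
dF\wedge d\alpha = \phi'(s)\,d\beta\wedge d\alpha.
\end{equation*}
So it suffices to exhibit a point of $TM$ at which both $\phi'(s)\neq 0$ and $d\beta\wedge d\alpha\neq 0$, since then $dF$ and $d\alpha$ are pointwise linearly independent and no nontrivial smooth relation $\Phi(\alpha,F)\equiv 0$ can hold.

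For the first factor, $\phi$ smooth and non-constant forces $\{t\in\Real : \phi'(t)\neq 0\}$ to be a nonempty open subset of $\Real$; since $s=\beta/\alpha$ is homogeneous of degree $0$ in $y$ and takes an open interval of values on each fibre $T_xM$ (as long as $\beta\not\equiv 0$), the preimage is a nonempty open subset of $TM$. For the second factor, examine only the vertical (i.e.\ $dy^i$) parts: $\paa_i\alpha = a_{ij}y^j/\alpha$, whereas $\paa_i\beta = b_i(x)$. The map $y\mapsto (a_{ij}(x)y^j)$ is a linear isomorphism $T_xM\to T_x^*M$, so the covector $(a_{ij}y^j/\alpha)$ is proportional to the fixed nonzero covector $(b_i(x))$ exactly when $y$ lies on the single line $\mathrm{span}(a^{ij}b_j)\subset T_xM$. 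Off this line, the vertical components of $d\alpha$ and $d\beta$ are already linearly independent, hence $d\alpha\wedge d\beta\neq 0$ on an open dense subset of $TM$.

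The computations are routine; the only delicate point is the step 3 argument that the two open conditions $\phi'(s)\neq 0$ and $d\alpha\wedge d\beta\neq 0$ can be met simultaneously. This requires $\beta\not\equiv 0$, which is an implicit hypothesis of the proposition (otherwise $F=\alpha\cdot\phi(0)$ would be a constant multiple of $\alpha$ and functional dependence would be immediate); under this natural restriction, both sets are open and dense, so their intersection is nonempty, and the argument closes.
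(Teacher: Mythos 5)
Your proof follows essentially the same route as the paper's: both reduce the problem to the identity $dF\wedge d\alpha=\phi'(s)\,d\beta\wedge d\alpha$ and then show the second factor is nonzero by looking only at the $dy^i\wedge dy^j$ components of $d\beta\wedge d\alpha$. Your linear-algebra formulation of that last step --- $\paa_i\alpha=a_{ij}y^j/\alpha$ is proportional to the fixed covector $b_i(x)$ only for $y$ on the single line spanned by $a^{ij}b_j$ --- is a cleaner, more direct version of what the paper does (the paper assumes $\ell_ib_j-\ell_jb_i=0$, contracts with $y^i$, differentiates, and lands on the impossible relation $\alpha\beta h_{jk}=0$; the two computations express the same non-proportionality). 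You are also right that $\beta\not\equiv 0$ must be assumed; the paper uses this only tacitly in the final line of its proof.

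The one step that does not hold as written is the claim that $\{\phi'(s)\neq 0\}$ is a nonempty \emph{dense} open subset of $TM$. Density is simply false in general, and even nonemptiness does not follow from $\phi$ being non-constant on $\Real$: the function $s=\beta/\alpha$ is bounded on each fibre by the $\alpha$-norm of $\beta$ at that point, so its range over $TM$ need not meet the set where $\phi'\neq 0$. If $\phi'$ vanishes on the whole (connected) range of $s$ while $\phi$ is non-constant elsewhere on $\Real$, then $F$ is a constant multiple of $\alpha$ and the conclusion of the proposition actually fails --- so what is really needed is that $\phi$ be non-constant on the range of $s$. This is a defect of the statement itself rather than of your argument alone (the paper's proof makes the same leap when it asserts $\partial\phi/\partial s\neq 0$ from non-constancy), but your write-up elevates it to a load-bearing ``open and dense'' claim, so it deserves to be flagged: under the corrected hypothesis, $\{\phi'(s)\neq 0\}$ is a nonempty open set, and intersecting it with the dense open set where $d\alpha\wedge d\beta\neq 0$ (dense over the locus $\{b\neq 0\}$, which the relevant points can be arranged to lie over) closes the argument.
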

\begin{proof}
Suppose that  the metrics $\alpha$ and $F=\phi(s)\alpha$ are functionally dependent. Then, the two form
$$dF\wedge d\alpha=\frac{\partial \phi}{\partial s}d\beta\wedge d \alpha$$
vanishes. Since $\phi$ is not constant then $\frac{\partial \phi}{\partial s}\neq 0$ and hence $d\beta\wedge d \alpha=0$. Now,
\begin{eqnarray*}
d\beta\wedge d \alpha = \pa_i \beta \ \pa_j \alpha \ dx^i\wedge dx^j
+  \paa_i \beta \ \paa_j \alpha \ dy^i\wedge dy^j
+\left(\pa_i \beta \ \paa_j \alpha-\pa_i \alpha \ \paa_j \beta \right) dx^i\wedge dy^j.
\end{eqnarray*}
Then, all of the   combinations in the right hand side must vanish, especially the combination $ \paa_i \beta \ \paa_j \alpha \ dy^i\wedge dy^j$  which vanishes only when $ \paa_i \beta \ \paa_j \alpha  $ is symmetric in $i$ and $j$, that is
 $$ \paa_i \beta \ \paa_j \alpha  -\paa_j \beta \ \paa_i \alpha =\ell_{i}b_j-\ell_jb_i=0, \quad \ell_i:=\paa_i \alpha.$$
Then contraction by $y^i$ yields   $\alpha b_j-\beta\ell_j=0$ and differentiation with respect to $y^k$ together with the property that $\ell_{i}b_j=\ell_jb_i$ gives
 $$\alpha \beta h_{jk}=0$$
  where $h_{jk}$ is the angular metric. Since none of $h_{jk}$, $\alpha$ and $\beta$ can  be zero we get a contradiction. Therefore the proof is completed.
\end{proof}

\begin{remark}\label{F_beta_metrics}
The above proposition is still valid if we replace the Riemannian metric $\alpha$ by a Finslerian one.
\end{remark}

\begin{proposition}\label{Riem_Objects}
  Any covariant symmetric tensor of type $(0,p)$ on a Riemannian manifold $(M,\alpha)$ being  parallel with respect to the Riemannian connection, in the sense that the covariant derivative of its components vanishes identically, induces a holonomy invariant function on $\T M$ with respect to the   geodesic spray  $S$. Moreover, the parallel anti-symmetric tensor on $M$ induces the zero function.
\end{proposition}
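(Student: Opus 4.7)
The plan is to associate to a covariant tensor $T$ of type $(0,p)$ with components $T_{i_1\ldots i_p}(x)$ the natural function on $\T M$ obtained by contraction with $y$:
$$E_T(x,y) \;=\; T_{i_1\ldots i_p}(x)\, y^{i_1}\cdots y^{i_p},$$
and to prove that $E_T \in \hol$ whenever $T$ is symmetric and parallel. By the definition of $\hol$ (see \eqref{eq:hol}), it suffices to show $\mathcal L_X E_T=0$ for every $X\in \H$. Since $\H$ is the Lie algebra generated by horizontal vector fields, and since $\mathcal L_{[X,Y]} = [\mathcal L_X,\mathcal L_Y]$, the identity $\mathcal L_X E_T=0$ propagates from generators to all iterated brackets. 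So the whole statement reduces to verifying $\delta_k E_T = 0$ for every basis horizontal field $\delta_k = \pa_k - G^j_k\paa_j$.

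The main computation is a direct expansion. Since $T$ is symmetric and the $y^i$'s commute,
$$\paa_j\bigl(T_{i_1\ldots i_p}\,y^{i_1}\cdots y^{i_p}\bigr)\;=\;p\,T_{j\,i_2\ldots i_p}\,y^{i_2}\cdots y^{i_p}.$$
On a Riemannian manifold the coefficients $G^j_k$ are linear in $y$, namely $G^j_k=\Gamma^{j}_{kl}(x)\,y^l$ with $\Gamma^{j}_{kl}$ the Christoffel symbols, so
$$\delta_k E_T \;=\; \pa_k T_{i_1\ldots i_p}\, y^{i_1}\cdots y^{i_p} \;-\; p\,\Gamma^{j}_{kl}\,T_{j\,i_2\ldots i_p}\, y^l y^{i_2}\cdots y^{i_p}.$$
Using again the symmetry of $T$, the second term equals $\sum_{a=1}^{p}\Gamma^{j}_{k i_a}T_{i_1\ldots j\ldots i_p}\,y^{i_1}\cdots y^{i_p}$, so the whole expression is precisely $T_{i_1\ldots i_p|k}\,y^{i_1}\cdots y^{i_p}$, which vanishes by the parallelism assumption. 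Thus $d_h E_T=0$, hence $\mathcal L_{\delta_k}E_T=0$ for every $k$, and $E_T\in \hol$ by the bracket argument above.

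The anti-symmetric statement is immediate: if $T_{i_1\ldots i_p}$ is totally skew in its indices, then contracting it with the fully symmetric tensor $y^{i_1}\cdots y^{i_p}$ yields $E_T\equiv 0$, which is trivially holonomy invariant (and the claim of the proposition reduces to the statement that the induced function is the zero function). I do not foresee a serious obstacle; the only point requiring care is the propagation from horizontal generators to the whole module $\H$, but this is taken care of by the elementary identity $\mathcal L_{[X,Y]}=[\mathcal L_X,\mathcal L_Y]$ and a straightforward induction on the bracket length appearing in the definition \eqref{eq:14} of $\H$.
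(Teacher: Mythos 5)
Your proof is correct and follows essentially the same route as the paper: contract the parallel symmetric tensor with $y^{i_1}\cdots y^{i_p}$ and verify that the horizontal derivative of the resulting function is exactly the contraction of the vanishing covariant derivative. The only differences are cosmetic — you carry out the computation for general $p$ where the paper writes out the case $p=3$ and appeals to analogy, and you make explicit the (correct) propagation of $\mathcal L_X E_T=0$ from horizontal generators to all iterated brackets in $\H$, a step the paper leaves implicit.
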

\begin{proof}
We prove the statement for a covariant tensor of type $(0,3)$ and then the proof of any such covariant tensors can be done in a similar manner. Let
$$T=T_{ijk}(x)dx^i\otimes dx^j \otimes dx^k$$
be a tensor of type $(0,3)$ on $M$ such that $T_{ijk;h}=0$, where the symbol "$;$" refers to the Riemannian covariant derivative.
Since  $T_{ijk;h}=0$, then we have
\begin{equation}\label{T_Covar_deriv}
\pa_h T_{ijk}-T_{\ell jk}G^\ell_{ih}-T_{i\ell k}G^\ell_{jh}-T_{ij\ell}G^\ell_{kh}=0.
\end{equation}
Now, define the function $$Q(x,y):=T_{ijk}y^iy^jy^k.$$
We claim that the $Q$ is holonomy invariant function on $\T M$. Indeed,   $Q$ is holonomy invariant with respect to $ S$  means that $d_hQ=0$ and locally gives $Q_{|i}=0$ where the symbol $|$ is the covariant derivative with respect to Berwald connection associated to $S$. Now, using \eqref{T_Covar_deriv} and the fact that $N^h_i=y^mG^h_{mi}$,  we have
\begin{align*}
Q_{|i}&=\pa_i Q-N^\ell_i\paa_\ell Q \\
      &=\pa_i (T_{jk\ell}y^jy^ky^\ell)-N^h_i\paa_h (T_{jk\ell}y^jy^ky^\ell) \\
      &=y^jy^ky^\ell\pa_i T_{jk\ell}-N^h_i T_{jk\ell}(\delta^j_h y^ky^\ell+\delta^k_h y^jy^\ell+\delta^\ell_h y^jy^k) \\
      &=y^jy^ky^\ell\pa_i T_{jk\ell}-y^m G^h_{mi} T_{jk\ell}(\delta^j_h y^ky^\ell+\delta^k_h y^jy^\ell+\delta^\ell_h y^jy^k) \\
      &=y^jy^ky^\ell\pa_i T_{jk\ell}-y^m G^h_{mi} T_{hk\ell} y^ky^\ell-y^m G^h_{mi}T_{jh\ell}  y^jy^\ell-y^m G^h_{mi}T_{jkh}  y^jy^k \\
      &=y^jy^ky^\ell\left( \pa_i T_{jk\ell}-  G^h_{ji} T_{hk\ell} - G^h_{ki}T_{jh\ell}   -  G^h_{\ell i}T_{jkh} \right)   \\
      &=y^jy^ky^\ell   T_{jk\ell;i}  \\
      &=0.
\end{align*}
That is, $Q$ is holonomy invariant with respect to $S$.

If, for example, $T_{ijk}=-T_{ikj}$, then we have $$T_{ijk}y^jy^k=T_{ikj}y^ky^j=-T_{ijk}y^ky^j$$
that is, $T_{ijk}y^jy^k=0$ and hence the proof is completed.
\end{proof}

 \begin{theorem}\label{Main_Theorem}
If a Riemannian manifold admits a parallel one form then the metrizability freedom of the geodesic spray is greater than one.    Or  equivalently, if the metrizability freedom of the geodesic spray of a Riemannian manifold is $1$ then the manifold does not admit a parallel one form.
\end{theorem}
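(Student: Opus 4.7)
The plan is to show that, under the hypothesis, the module $\hol$ of holonomy-invariant functions contains at least two functionally independent elements, so that $\mu_S = \mathrm{rank}(\hol) \geq 2$ and the contrapositive gives the theorem.

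The first element is $\alpha$ itself: by definition $S$ is the geodesic spray of $(M,\alpha)$, so $\alpha$ is a $1$-homogeneous regular element of $\hol$ and in particular belongs to it. Before producing a second element, note that any parallel one form $\beta = b_i\,y^i$ is automatically closed (the parallelism relation $\pa_i b_j - G^k_{ij}b_k = 0$ antisymmetrizes to $\pa_i b_j - \pa_j b_i = 0$ since the Levi-Civita connection is torsion-free), so the gradient hypothesis of Theorem~\ref{Equiv_Riem.} is satisfied locally.

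For the second element, fix any non-constant positive smooth $\phi:\mathbb{R}\to\mathbb{R}$ (rescaling $\beta$ if necessary so that the resulting structure is well defined on an open set of $\TM$) and form the $(\alpha,\beta)$-metric $F = \alpha\,\phi(\beta/\alpha)$. Because $\beta$ is parallel with respect to the Levi-Civita connection of $\alpha$, the classical fact recalled in the introduction (cf.\ \cite{Shibata}) says that the geodesic spray of $F$ coincides with $S$; hence $F \in \hol$. Proposition~\ref{alpha_beta_indep.} then yields the functional independence of $\alpha$ and $F$, so $\hol$ has rank at least two and $\mu_S \geq 2$.

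The only non-routine ingredient is the identification of the spray of $F = \alpha\,\phi(\beta/\alpha)$ with that of $\alpha$ when $\beta$ is $\alpha$-parallel. This is standard in the theory of $(\alpha,\beta)$-metrics and is also directly consistent with the equivalence established in Theorem~\ref{Equiv_Riem.}: since $\beta \in \hol$, any smooth combination such as $\alpha\,\phi(\beta/\alpha)$ whose spray coincides with $S$ lies in $\hol$ as well. If a self-contained verification were preferred, one would write the spray coefficients of $F$ explicitly and observe that the correction terms involve the covariant derivative $\nabla^\alpha \beta$, which vanishes by hypothesis.
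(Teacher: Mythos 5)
Your proposal is correct and follows essentially the same route as the paper: both arguments build the $(\alpha,\beta)$-metric $F=\alpha\,\phi(\beta/\alpha)$, use the fact that $\alpha$-parallelism of $\beta$ forces $F$ and $\alpha$ to share the geodesic spray $S$, and then invoke Proposition~\ref{alpha_beta_indep.} to conclude that $\alpha$ and $F$ are functionally independent elements of $\hol$, so $\mu_S\ge 2$. The only differences are cosmetic (you argue directly rather than by contradiction, and you add the harmless observation that a parallel form is automatically closed).
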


\begin{proof}
Assume that  a Riemannian metric $\alpha$ and the coefficients of its geodesic spray $S$ are $G_\alpha^i$ with metrizability freedom $1$. Now, let $\beta$ a parallel one form with respect to $\alpha$. For a Finsler metric  $F$  of $(\alpha,\beta)$-type, then
$$G_F^i=G_\alpha^i+D^i,$$
where $G_F^i$ are the coefficients of the geodesic spray of $F$. It is known that the two sprays are equal if and only if $D^i$ vanishes. Moreover,  $D^i$ vanishes if and only if $\beta$ is parallel with respect to  $\alpha$.
Since we assumed that $\beta$ is parallel with respect to $\alpha$, then $D^i$ vanishes and $G_F^i=G_\alpha^i$.  This means that the spray $G_\alpha^i$ is the geodesic spray of $\alpha$ and $F$, but  by Proposition \ref{alpha_beta_indep.}, $dF\wedge d\alpha=d\beta\wedge d\alpha\neq 0$ which means that the freedom is greater than 1 and this is a contradiction.
  \end{proof}

It should be noted that the condition that the  metrizability freedom of the  geodesic spray is  greater than one is not sufficient for a Riemannian metric to admit a  parallel one form.
 This  can be shown by the following counter-example.
  \begin{example}
  Let $M=\mathbb{R}^4$ and consider the Riemannian metric
  $$\alpha=\sqrt{x^2(y^1)^2+x^1(y^2)^2+x^4(y^3)^2+x^3(y^4)^2}.$$
  Straightforward calculations lead to
$$G^1=\frac{1}{4} \frac{y^2(2y^1-y^2)}{x^2}, \quad G^2=\frac{1}{4} \frac{y^1(2y^2-y^1)}{x^1}, \quad G^3=\frac{1}{4} \frac{y^4(2y^3-y^4)}{x^4},\quad G^4=\frac{1}{4} \frac{y^3(2y^4-y^3)}{x^3}.$$
The non-zero  components $R^h_{ij}$ of the   curvature of the geodesic spray are given by
$$R^1_{12}=-\frac{1}{4} \frac{y^2(x^1+x^2)}{x^1(x^2)^2}, \quad R^2_{12}= \frac{1}{4} \frac{y^1(x^1+x^2)}{(x^1)^2x^2}, \quad R^3_{34}=-\frac{1}{4} \frac{y^4(x^3+x^4)}{x^3(x^4)^2}, \quad R^4_{34}= \frac{1}{4} \frac{y^3(x^3+x^4)}{(x^3)^2x^4}.$$
Assume that $\beta=b_i(x)y^i$ is parallel one form, then we must have $$R^h_{ij}b_h=0.$$
This yields the following two equations
$$R^1_{12}b_1+R^2_{12}b_2+R^3_{12}b_3+R^4_{12}b_4=0, \quad R^1_{34}b_1+R^2_{34}b_2+R^3_{34}b_3+R^4_{34}b_4=0. $$
By substituting and simplifying, we get
$$y^2x^1b_1-y^1x^2b_2=0, \quad y^4x^3b_3-y^3x^4b_4=0 .$$
It is clear that the above two equations are satisfied only if $b_i=0$ for all $i$. That is, there is no non-trivial parallel one form. To find the metrizability freedom, we have to calculate the following Lie brackets:
$$h_{12}:=[h_1,h_2]=-\frac{1}{4} \frac{y^2(x^1+x^2)}{x^1(x^2)^2} \frac{\partial}{\partial y^1}+\frac{1}{4} \frac{y^1(x^1+x^2)}{(x^1)^2x^2}\frac{\partial}{\partial y^1},$$
$$h_{34}:=[h_3,h_4]=-\frac{1}{4} \frac{y^4(x^3+x^4)}{x^3(x^4)^2} \frac{\partial}{\partial y^3}+ \frac{1}{4} \frac{y^3(x^3+x^4)}{(x^3)^2x^4}\frac{\partial}{\partial y^4},$$
$$h_{112}:=[h_1,h_{12}]=\frac{1}{8} \frac{y^2(x^1+3x^2)}{(x^1)^2(x^2)^2} \frac{\partial}{\partial y^1}-\frac{1}{8} \frac{y^1(x^1+3x^2)}{(x^1)^3x^2}\frac{\partial}{\partial y^1},$$
$$h_{334}:=[h_3,h_{34}]=\frac{1}{8} \frac{y^4(x^3+3x^4)}{(x^3)^2(x^4)^2} \frac{\partial}{\partial y^3}- \frac{1}{8} \frac{y^3(x^3+3x^4)}{(x^3)^3x^4}\frac{\partial}{\partial y^4},$$
where $h_i$ are the horizontal basis and one can see that the Lie brackets $h_{13}=h_{14}=h_{23}=h_{24}=0$. So we have only two linearly independent vectors out of the above vectors and the successive brackets do not generate new directions. That is the codimension of the holonomy distribution is $2$ and hence the metrizability freedom is $2$.

On the other hand,  by using Maple one can solve the system $d_hF=0$ and obtain the solution of the form
$$F(x,y)=f(x^2(y^1)^2+x^1(y^2)^2,x^4(y^3)^2+x^3(y^4)^2).$$
For example, the metric
$$F=\sqrt[4]{(x^2(y^1)^2+x^1(y^2)^2)^2+(x^4(y^3)^2+x^3(y^4)^2)^2}$$
is another Finsler metric has the same geodesic spray and this assures that the metrizability freedom is greater than $1$. In fact $F$ is Berwaldian.
  \end{example}

  We end this section by the following interesting result.
  \begin{theorem}
  A sufficient    condition  for a Riemannian manifold $(M,F)$ to admit a parallel one form  is
  $$  R^\mu _{hij}=0$$
  for some indices $\mu$.
  \end{theorem}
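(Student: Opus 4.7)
The plan is to produce a parallel one-form $\beta = b_i(x) y^i$ by solving the parallelism PDE with initial data in the privileged direction singled out by the hypothesis, and then appealing to Theorem \ref{Equiv_Riem.} to conclude that $\beta$ is indeed parallel with respect to $F$.

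First I would recast the problem: by Theorem \ref{Equiv_Riem.}, it suffices to exhibit a one-form $\beta = b_i(x) y^i$ with $b_i$ gradient such that $d_h \beta = 0$, which is equivalent to the overdetermined first-order system $\partial_j b_i - \Gamma^r_{ij} b_r = 0$. Imposing $\partial_k \partial_j b_i = \partial_j \partial_k b_i$ along any solution produces the algebraic integrability condition $R^r_{ijk} b_r = 0$, precisely the dual to the nullity condition that parallel vector fields satisfy by the proposition preceding. The hypothesis $R^\mu_{hij} = 0$ is exactly the statement that the covector with components $b^{0}_r = \delta^\mu_r$ annihilates this contraction at every point, supplying a nontrivial admissible initial value $b^0 = dx^\mu|_{x_0}$ at any chosen base point $x_0$.

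Next I would construct $b(x)$ on a neighborhood of $x_0$ by parallel-transporting $b^0$ along radial geodesics. Well-definedness (path-independence) rests on preservation of the algebraic constraint along parallel transport: differentiating $R^r_{ijk} b_r$ along a curve, the term with $\nabla b$ vanishes since $b$ is parallel, and the remaining term $(\nabla_\ell R^r_{ijk}) b_r$ is then controlled by the second Bianchi identity together with the iterated hypothesis that every $\mu$-component of $R$ vanishes. A standard Frobenius-type existence result for overdetermined linear PDE then furnishes a smooth $b$ on a neighborhood. Finally, torsion-freeness $\Gamma^r_{ij} = \Gamma^r_{ji}$ yields $\partial_j b_i - \partial_i b_j = \Gamma^r_{ij} b_r - \Gamma^r_{ji} b_r = 0$, so $b$ is closed and locally a gradient, matching the hypothesis of Theorem \ref{Equiv_Riem.}; the associated $\beta = b_i(x) y^i$ is therefore a parallel one-form on $(M, F)$.

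The main obstacle is the propagation step: the bare hypothesis $R^\mu_{hij} = 0$ ensures only the leading algebraic compatibility, so one must verify the prolonged tower $(\nabla R) b = 0$, $(\nabla^2 R) b = 0$, and so on. The clean way to see this is to work in a parallel coframe extending $b$, in which the only surviving curvature contractions that appear in the prolonged system are precisely the components $R^\mu_{\cdot\cdot\cdot}$ that the hypothesis kills, so the entire infinite tower collapses to the single given algebraic condition.
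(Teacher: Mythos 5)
Your strategy is essentially the paper's: view $b_{i|j}=0$ as an overdetermined linear total-differential system, note that its first integrability condition is the algebraic constraint $R^r_{hij}b_r=0$, and use the hypothesis $R^\mu_{hij}=0$ to supply admissible data supported on the index $\mu$. You are actually more careful than the paper in one important respect: you explicitly flag that Frobenius-type integrability requires not just this first obstruction but the whole prolonged tower $(\nabla R)b=0$, $(\nabla^2R)b=0,\dots$ (equivalently, path-independence of the parallel transport of $b^0=dx^\mu|_{x_0}$), whereas the paper's proof simply declares $b_\mu R^\mu_{\ell ij}=0$ to be ``the compatibility condition'' and stops. Unfortunately your resolution of that step does not work. ``A parallel coframe extending $b$'' is circular: a parallel coframe exists only on a flat manifold, and even a single parallel extension of $b^0$ is precisely what is to be constructed. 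Concretely, with $b_r=\delta^\mu_r$ the first prolonged obstruction is $(\nabla_\ell R)^\mu_{hij}=\partial_\ell R^\mu_{hij}+\Gamma^\mu_{\ell r}R^r_{hij}-(\text{terms proportional to }R^\mu_{\cdot\cdot\cdot})=\Gamma^\mu_{\ell r}R^r_{hij}$, and neither the hypothesis nor the second Bianchi identity kills $\Gamma^\mu_{\ell r}R^r_{hij}$.

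The gap is genuine and cannot be closed from the stated hypothesis alone, so the propagation step is where both your argument and the paper's break down. Consider the cone $g=dr^2+r^2g_\Sigma$ over a surface $\Sigma$ whose Gaussian curvature is not identically $1$, with coordinates $(x^1,x^2,x^3)=(r,\theta^1,\theta^2)$. Here $R(X,Y)\partial_r=0$ for all $X,Y$, and since $g_{1i}=\delta_{1i}$ one gets $R^1_{hij}=g(R(\partial_i,\partial_j)\partial_h,\partial_r)=-g(R(\partial_i,\partial_j)\partial_r,\partial_h)=0$, so the hypothesis holds with $\mu=1$; yet any parallel vector field must lie in the rank-one nullity line $\mathbb{R}\,\partial_r$, and $\nabla_{\partial_a}(\lambda\partial_r)=(\partial_a\lambda)\partial_r+(\lambda/r)\partial_a$ forces $\lambda=0$, so no nonzero parallel one-form exists. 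A correct statement needs a stronger hypothesis --- for instance the full Ambrose--Singer conditions on $\nabla^kR$, or the condition $G^\mu=0$ appearing in the paper's final theorem, under which your radial-transport construction does go through because the relevant connection coefficients vanish. Your instinct about where the difficulty sits was exactly right; the claimed collapse of the tower is the step that fails.
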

  \begin{proof}
  For a non-trivial  parallel one form $\beta=b_i(x)y^i$ on $M$, we have $d_h\beta=0$ and this implies the compatibility condition
  $$d_R\beta=0\Longrightarrow R^h_{ij}b_h=0.$$
  Now, if $R^\mu_{\ell ij}=0$ for some indices $\mu$, then  we have $b_\mu R^\mu_{\ell ij}=0$ and hence  the functions  $b_\mu(x)$  are arbitrary and the rest of the $b_i$'s are zero.  Then, we can choose the functions  $b_\mu(x)$   such that the   system
  $$\partial_k b_\mu-G^r_{k \mu}b_r=0$$
  is   satisfied. In fact, the condition $b_\mu R^\mu_{\ell ij}=0$ is the compatibility condition for the above system. 
     This completes the proof.
  \end{proof}

\section{Some applications}

  One of the interesting topics in Riemannian geometry is the existence of Killing vector fields.  It should be noted that the existence of a Killing vector field is related to the metrizability freedom of the geodesic spray of a Riemannian metric. According to the work of \cite{Duggal,Eisenhart}, the existence of a proper Killing vector field is equivalent to the existence of an essentially different second order tensor which has a constant covairant derivative. That is,  we have the following theorem.

 \begin{theorem}
If a Riemannian  manifold $(M,\alpha)$, $\alpha=\sqrt{a_{ij}y^iy^j}$, admits a proper affine Killing vector field then the metrizability  freedom of its geodesic spray   is greater than $1$.
 \end{theorem}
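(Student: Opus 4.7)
The strategy is to convert the hypothesis into the existence of two functionally independent holonomy invariant functions on $TM$, whence $\mu_S > 1$ follows directly from Definition \ref{def:v}. First, I would invoke the classical Eisenhart--Duggal correspondence cited just before the theorem: the existence of a proper affine Killing vector field on $(M,\alpha)$ yields a symmetric covariant $2$-tensor $T = T_{ij}(x)\, dx^i \otimes dx^j$ with vanishing Levi-Civita covariant derivative, $T_{ij;k}=0$, that is essentially different from the metric tensor $a = a_{ij}(x)\, dx^i \otimes dx^j$, i.e.\ not a constant scalar multiple of $a$. This is where the word \emph{proper} (affine but not Killing) is used: it ensures that the parallel tensor extracted from $X$ is genuinely new relative to the metric.

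Next, applying Proposition \ref{Riem_Objects} to the symmetric parallel tensor $T$ produces the holonomy invariant function $Q(x,y) := T_{ij}(x)\, y^i y^j$ with respect to the geodesic spray $S$ of $\alpha$. The same proposition, applied to $a$ itself (trivially parallel), recovers the holonomy invariance of the energy $E(x,y) := a_{ij}(x)\, y^i y^j = \alpha^2$; this can also be seen directly from $d_h \alpha^2 = 0$ since $S$ is the geodesic spray of $\alpha$. So both $E$ and $Q$ live in $\hol$.

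The crucial step is to verify the functional independence of $E$ and $Q$ on $TM$. Since both are fibrewise quadratic in $y$, it suffices to inspect the purely vertical part of $dE \wedge dQ$: the coefficient of $dy^i \wedge dy^k$ is, up to a numerical factor, $(a_{ij} T_{kl} - T_{ij} a_{kl})\, y^j y^l$, which is non-zero whenever $a_{ij}(x)$ and $T_{ij}(x)$ are linearly independent as symmetric $(0,2)$-tensors --- precisely the essentially-different hypothesis secured in step one. Consequently $dE \wedge dQ \not\equiv 0$, so $E$ and $Q$ generate a rank-two subspace of $\hol$; by Definition \ref{def:v}, $\mu_S = \operatorname{rank}(\hol) \geq 2$, as required.

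The main obstacle is the appeal in step one to the Eisenhart--Duggal correspondence, which is the content behind the classical characterisation of reducible Riemannian manifolds: extracting from a connection-preserving, non-isometric vector field an honest parallel symmetric $2$-tensor not proportional to the metric. Once that tensor is in hand, the rest is a clean packaging of the already-proved Proposition \ref{Riem_Objects} with a short functional-independence calculation patterned on Proposition \ref{alpha_beta_indep.}.
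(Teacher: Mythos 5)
Your proposal is correct and follows essentially the same route as the paper: both extract, via the Eisenhart--Duggal result, a parallel symmetric second-order tensor $k_{ij}$ essentially different from $a_{ij}$ and use the induced quadratic function $k_{ij}y^iy^j$ (the paper writes it as $\overline{F}=\sqrt{k_{ij}y^iy^j}$ solving $d_h\overline{F}=0$) as a second holonomy invariant, forcing $\mu_S>1$. The only difference is one of detail: you route the invariance through Proposition \ref{Riem_Objects} and explicitly verify functional independence via the vertical part of $dE\wedge dQ$, a step the paper leaves implicit in the phrase ``essentially different solution.''
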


 \begin{proof}
 Assume that $(M,\alpha)$ is a Riemannian manifold with the geodesic spray $S$ admits a proper affine Killing vector field. Then, by  \cite{Duggal,Eisenhart}, there exists an essential different  second order tensor $k_{ij}$ with constant covariant derivative.  Then, an appropriate combination   $a(x)a_{ij}+b(x)k_{ij}$ produces   an essentially different solution $\overline{F}$ for the system $d_h\overline{F}=0$, for example, $\overline{F}=\sqrt{k_{ij}y^iy^j}$.
 That is, the metrizability freedom of geodesic spray is greater than $1$.
 \end{proof}

 By \cite{Szabo,Matveev},  each Berwald metric is geodesically equivalent a Riemannian metric. But not any Riemannian metric is geodesically equivalent to a Berwald metric. We have the following theorem.
\begin{theorem}
Let $(M,\alpha)$ be a Riemnnian metric with the geodesic spray $S$. Then, if $(M,\alpha)$ admits a parallel one form, then there exists a Berwald metric which is geodesically equivalent to $\alpha$.
\end{theorem}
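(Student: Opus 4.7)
The plan is to construct a non-Riemannian Berwald metric of $(\alpha,\beta)$-type using the given parallel one form $\beta$. As a first try, I would form the Randers-type metric $F = \alpha + \epsilon\,\beta$, choosing a small constant $\epsilon > 0$ so that the Finslerian norm of $\epsilon\beta$ with respect to $\alpha$ is strictly less than $1$, which guarantees that $F$ is a strongly convex Finsler function. More generally, one could take $F = \alpha\,\phi(s)$ with $s = \beta/\alpha$ for any suitable non-constant smooth positive $\phi$ yielding a regular fundamental tensor.

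Next, I would invoke the standard decomposition of the spray coefficients of an $(\alpha,\beta)$-metric, $G_F^i = G_\alpha^i + D^i$, already recalled in the proof of Theorem \ref{Main_Theorem}. The obstruction term $D^i$ vanishes if and only if $\beta$ is parallel with respect to the Levi-Civita connection of $\alpha$. Since $\beta$ is assumed parallel, $D^i \equiv 0$, so $G_F^i = G_\alpha^i$. Thus $F$ and $\alpha$ share the very same geodesic spray $S$, i.e.\ they are geodesically equivalent.

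Now $S$ is quadratic in the fibre coordinates $y^i$ because it is the geodesic spray of the Riemannian metric $\alpha$. Consequently, the Berwald curvature $G^i_{jkl} = \paa_j \paa_k \paa_l G_F^i$ of $F$ vanishes identically, which is exactly the definition of $F$ being Berwaldian. Moreover, Proposition \ref{alpha_beta_indep.} ensures that $\alpha$ and $F = \alpha\phi(s)$ are functionally independent whenever $\phi$ is non-constant, so $F$ is a genuinely non-Riemannian Berwald metric geodesically equivalent to $\alpha$, which proves the theorem.

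The only delicate point is the regularity of $F$: the $(\alpha,\beta)$-construction only produces a bona fide Finsler function under a smallness condition on $\beta$. This is a standard constraint and is handled either by rescaling $\beta$ via the free parameter $\epsilon$ in the Randers case, or by selecting a $\phi$ whose admissibility interval for $s$ covers the range of $\beta/\alpha$. Apart from this routine check, all the geometric content of the statement follows immediately from the vanishing of $D^i$ for parallel $\beta$ together with Proposition \ref{alpha_beta_indep.}.
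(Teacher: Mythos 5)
Your proposal is correct and follows essentially the same route as the paper, which simply takes the Randers metric $F=\alpha+\beta$ and observes that parallelism of $\beta$ forces $D^i=0$, so $F$ is a Berwald metric with the same geodesic spray $S$. The only additions you make are worthwhile refinements the paper leaves implicit: the $\epsilon$-rescaling of $\beta$ to guarantee strong convexity of $F$, and the appeal to Proposition \ref{alpha_beta_indep.} to confirm that $F$ is genuinely different from $\alpha$.
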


\begin{proof}
Assume that $(M,\alpha)$ admits a parallel one form $\beta$, then the Finsler metric $F=\alpha+\beta$ is Berwald metric has the same geodesic spray $S$. That is $F$ geodesically equivalent to $\alpha$.
\end{proof}

\begin{theorem}
Let $M$ be a two dimensional Riemannian manifold  with the geodesic spray $S$ with non-vanishing curvature. Then  $M$ does not admit a parallel one form.
\end{theorem}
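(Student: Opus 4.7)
The plan is to convert the hypothesis into the algebraic compatibility condition $R^h_{ij}b_h=0$ used in the sufficient-condition theorem above, and then exploit the special form of the Riemann tensor in dimension two to force $b_i\equiv 0$. First, suppose for contradiction that $\beta=b_i(x)y^i$ is a non-trivial parallel one form. By Theorem~\ref{Equiv_Riem.} we have $d_h\beta=0$, and applying the horizontal differential a second time (equivalently, using the Ricci identity on the parallel $\beta$) produces $R^h_{ij}b_h=0$, exactly the compatibility condition that appeared in the proof of the preceding theorem.

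Second, on any two-dimensional Riemannian manifold the Levi-Civita curvature tensor has the classical Gauss form
\begin{equation*}
R_{hijk}=K\bigl(g_{hj}g_{ik}-g_{hk}g_{ij}\bigr),\qquad R^h_{ijk}=K\bigl(\delta^h_j\,g_{ik}-\delta^h_k\,g_{ij}\bigr),
\end{equation*}
where $K$ is the Gauss curvature. Contracting with $y^i$ and recalling $R^h_{jk}=y^i R^h_{ijk}$ yields $R^h_{jk}=K(\delta^h_j\,y_k-\delta^h_k\,y_j)$ with $y_k:=g_{ik}y^i$. Substituting into $R^h_{jk}b_h=0$ and using the hypothesis $K\neq 0$ gives the pointwise identity
\begin{equation*}
b_j\,y_k-b_k\,y_j=0\qquad\text{for every }y\in T_xM.
\end{equation*}

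Finally, since the $b_i$'s are independent of $y$ while $y_k=g_{ik}y^i$ is linear in $y$, reading off the coefficients of $y^1$ and $y^2$ in the only non-trivial case $(j,k)=(1,2)$ transforms the above into the linear system
\begin{equation*}
g_{12}\,b_1-g_{11}\,b_2=0,\qquad g_{22}\,b_1-g_{12}\,b_2=0,
\end{equation*}
whose coefficient determinant equals $\det g>0$. Hence $b_1=b_2=0$, contradicting the assumed non-triviality of $\beta$. The only delicate point in this outline is the initial reduction to $R^h_{ij}b_h=0$; once the Gauss form of the curvature is invoked, the remainder is purely linear algebra.
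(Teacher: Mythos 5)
Your proof is correct, but it follows a genuinely different route from the paper. The paper's argument stays entirely within its holonomy framework: since the curvature of $S$ is non-vanishing, the holonomy distribution $\mathcal D_{\mathcal H}$ has rank $2+1=3$ inside the $4$-dimensional $T(\mathcal T M)$, so the metrizability freedom is $4-3=1$, and Theorem \ref{Main_Theorem} then excludes a parallel one form. You instead argue directly at the tensor level: from $d_h\beta=0$ you extract the compatibility condition $R^h_{jk}b_h=0$ (exactly as the paper does in its sufficient-condition theorem), substitute the two-dimensional Gauss form $R^h_{ijk}=K(\delta^h_j g_{ik}-\delta^h_k g_{ij})$, and reduce to a $2\times 2$ linear system with determinant $\det g>0$, forcing $b_1=b_2=0$. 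Your computation checks out (with the paper's convention $R^h_{jk}=y^iR^h_{ijk}$ one indeed gets $K(b_jy_k-b_ky_j)=0$), and it is more elementary and self-contained --- it needs neither the metrizability-freedom theorem nor the rank count of the holonomy distribution, and it makes transparent exactly where two-dimensionality and $K\neq 0$ enter. What the paper's route buys is consistency with its main theme (everything is phrased through $\mu_S$) and a statement that the freedom equals $1$, which is slightly more information than the non-existence of $\beta$ alone. One small point you should make explicit: your contradiction is obtained pointwise at points where $K\neq 0$; if ``non-vanishing curvature'' is read as $K\not\equiv 0$ rather than $K\neq 0$ everywhere, you should add that a parallel one form has constant norm, so vanishing at a single point forces $\beta\equiv 0$ on a connected $M$.
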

\begin{proof}
Let $M$ be a two dimensional Riemannian manifold  with the geodesic spray $S$ with non-vanishing curvature. Since the geodesic spray has non-vanishing curvature, then rank of the image of the curvature is $1$ and hence the rank of the holonomy distribution is $2+1=3$. Now the metrizability freedom of $S$ is equal to the codimension of the holonomy distribution which is $4-3=1$. That is the metrizability freedom is $1$, hence by Theorem \ref{Main_Theorem} there is no parallel one form.
\end{proof}

\section{Parallel one form  on Finsler manifold}

\begin{definition}
A one form $\beta=b_i(x)y^i$ a Finsler manifold $(M,F)$ is said to be horizontally parallel with respect to the induced Berwald connection if and only if $b_{i|j}=0$, where the symbol $|$ refers to the horizontal covariant derivative of Berwald connection. That is, a parallel one form $\beta$ satisfies the system
$$d_h\beta=0, \quad d_R\beta=0, \quad d_\C\beta=\beta.$$
\end{definition}

\begin{remark}
It should be noted that the horizontal covariant derivative of a scalar function with respect to Cartan, Berwald, Chern(Rund) and Hashiguchi connections coincide. However, we keep talking about the horizontal covairant derivative with respect to Berwald connection. Moreover, we ensure that there is no correspondence between parallel one form and parallel vector fields on Finsler manifold in contrast the Riemannian case.  This is because  the metric tensor is not horizontally constant w.r.t Berwald connection. This correspondence happens only  in the Landsberg spaces.
\end{remark}

 \begin{theorem}\label{Parallel_Finsler}
  Sufficient   conditions for a Finsler manifold $(M,F)$ to admit a parallel one form are
  $$R^\mu _{ij}=0, \quad G^\mu_{ijk}=0\ (\equiv G^\mu_{jk}=G^\mu_{jk}(x)).$$
  for some indices $\mu$.
  \end{theorem}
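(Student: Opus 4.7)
The plan is to mirror the argument used in the Riemannian sufficient-condition theorem, but now tracking the extra $y$-dependence coming from the Berwald coefficients. I would first unpack the definition of parallel one form on a Finsler manifold: for $\beta = b_i(x)y^i$, the condition $b_{i|j}=0$ reads
$$\partial_j b_i - G^r_{ij}(x,y)\, b_r(x) = 0,$$
while $d_R\beta=0$ gives the algebraic compatibility $R^h_{ij}\,b_h = 0$, and $d_\C\beta=\beta$ is automatic since $b_i$ is a function of $x$ alone.

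Next I would extract a hidden Berwald-type compatibility. Since the right hand side of the PDE above depends on $y$ through $G^r_{ij}(x,y)$ while its left hand side $\partial_j b_i$ does not, differentiating once with respect to $y^k$ and using $\partial_{y^k} G^r_{ij} = G^r_{ijk}$ yields the algebraic condition $G^r_{ijk}\, b_r = 0$. Thus parallelism of $\beta$ in the Finsler setting amounts to the two algebraic obstructions
$$R^h_{ij}\, b_h = 0, \qquad G^r_{ijk}\, b_r = 0,$$
together with the first order linear system $\partial_j b_i - G^r_{ij}(x)\, b_r = 0$ (whose coefficients, on the surviving indices, are then functions of $x$ only).

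Now I would use the hypothesis: choose the special indices $\mu$ for which $R^\mu_{ij}=0$ and $G^\mu_{ijk}=0$, and set $b_r\equiv 0$ for $r$ outside this index set, leaving $b_\mu(x)$ as the unknowns. Both algebraic obstructions collapse to $R^\mu_{ij}\, b_\mu =0$ and $G^\mu_{ijk}\, b_\mu =0$, which are satisfied identically by the assumptions. The remaining system
$$\partial_j b_\mu - G^\nu_{j\mu}(x)\, b_\nu = 0$$
then has coefficients independent of $y$ (because $G^\mu_{ijk}=0$ means $G^\mu_{jk} = G^\mu_{jk}(x)$), so it becomes a genuine $x$-PDE on the chosen slots, whose compatibility is precisely the vanishing of the relevant components of the curvature, already guaranteed by $R^\mu_{ij}=0$. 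This is the same mechanism as in the closing theorem of Section~3, transplanted to the Berwald connection.

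The main obstacle is justifying that $R^\mu_{ij}=0$ is the correct integrability condition for this reduced $x$-system once $y$ has been eliminated by $G^\mu_{ijk}=0$; I would handle this by applying, essentially verbatim, the compatibility argument from the Riemannian sufficient-condition theorem, using that on a Riemannian manifold the geodesic spray's curvature and the Berwald $h$-curvature agree, and in the Finsler case the same algebraic role is played by $R^\mu_{ij}$ on the selected indices. One then picks a nontrivial solution $b_\mu(x)$ of the reduced system and verifies by construction that $\beta = b_\mu(x)\, y^\mu$ satisfies $d_h\beta=0$, $d_R\beta=0$, $d_\C\beta=\beta$.
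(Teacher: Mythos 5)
Your proposal follows essentially the same route as the paper's own proof: differentiate the parallelism equation $\partial_j b_i - G^r_{ij}b_r=0$ in $y^k$ to extract the Berwald compatibility $G^r_{ijk}b_r=0$, set the components of $b$ outside the distinguished index set $\mu$ to zero, and solve the reduced first-order system $\partial_j b_\mu - G^\nu_{j\mu}b_\nu=0$, whose integrability conditions are exactly the assumed vanishing of $R^\mu_{ij}$ and $G^\mu_{ijk}$. The only difference is that you make explicit why the reduced system has $y$-independent coefficients (via $G^\mu_{jk}=G^\mu_{jk}(x)$), a point the paper leaves implicit in the parenthetical of the statement.
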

  \begin{proof}
  For a non-trivial  parallel one form $\beta=b_i(x)y^i$ on $M$, we have
$$\pa_ib_j-G^h_{ij}b_h=0.$$
Taking the derivative of the above equation with respect to $y^k$, we get a compatibility condition coming from the Berwald curvature, namely,  we have
$$G^h_{ijk}b_h=0.$$
Now, if $R^\mu_{\ell ij}=0$ and $G^\mu_{ijk}=0$ for some indices $\mu$, then  we have $b_\mu R^\mu_{\ell ij}=0$ and $b_\mu G^\mu_{ijk}=0$ and hence  the functions  $b_\mu(x)$  are arbitrary and the rest of the $b_i$'s are zero. Then, we can choose the functions  $b_\mu(x)$   such that the system
  $$\partial_k b_\mu-G^r_{k \mu}b_r=0$$
  is satisfied.  In fact, the conditions $b_\mu R^\mu_{\ell ij}=0$ and $b_\mu G^\mu_{ijk}=0$ are the compatibility conditions for the above system.  This completes the proof.
  \end{proof}

  Analogously to Theorem \ref{Main_Theorem}, we can see that if a Finsler manifold $(M,F)$ admits a parallel one form then the metrizability freedom of the geodesic spray is greater than $1$. Moreover, if the freedom is greater than $1$, then this does not imply the existence of a parallel one form. We give the following counter-example.

\begin{example}
Let $F$ be a projectively flat metric with zero flag curvature on $\mathbb{B}^2(1)$ studied by Shen \cite{Shen_paper}
\begin{equation*}
  F=\frac{(\sqrt{(1-|x|^2)|y|^2+\langle x,y\rangle^2}+\langle x,y\rangle)^2}{(1-|x|^2)^2\sqrt{(1-|x|^2)|y|^2+\langle x,y\rangle^2}}
\end{equation*}
with the geodesic spray given by the coefficients
\begin{equation*}
  G^i=\frac{\sqrt{(1-|x|^2)|y|^2+\langle x,y\rangle^2}+\langle x,y\rangle}{1-|x|^2} y^i.
\end{equation*}

 The parallel one form $\beta=b_i(x)y^i$ on $(\mathbb{B}^2(1),F)$ must satisfy $b_hG^h_{ijk}=0$,  which yields  the following single algebraic equation in $b_1$ and $b_2$ 
 $$  b_1(x) ((x^1)^2y^2-x^1x^2y^1-y^2)- b_2(x) ((x^2)^2y^1-x^1x^2y^2-y^1)=0.$$
But there are no non-trivial $b_i(x)$ such that the above equation is satisfied. That is, $(\mathbb{B}^2(1),F)$   does not admit a parallel one form although the curvature vanishes and the metrizability freedom is maximal.
\end{example}

   The following example provides a Finsler metric $F$ admitting a parallel one form.

\begin{example}

Let
\begin{equation*}
 F=\sqrt{\sqrt{(y^1)^4+x^1x^2(y^2)^4+(y^3)^4}+ (y^3)^2}.
\end{equation*}
 The spray coefficients are given by
 $$G^1=-\frac{1}{24}\frac{x^2(y^2)^4}{(y^1)^2}, \quad G^2=\frac{1}{24}\frac{y^2(3x^1y^2+4x^2y^1)}{x^1x^2},  \quad G^3=0. $$
 Straightforward calculations lead to the coefficients of  Berwald connection:
$$G^1_{11}=-\frac{1}{4}\frac{x^2(y^2)^4}{(y^1)^4},\quad G^1_{12}=\frac{1}{3}\frac{x^2(y^2)^3}{(y^1)^3},\quad G^2_{12}=\frac{1}{6x^1},\quad G^1_{22}=-\frac{1}{2}\frac{x^2(y^2)^2}{(y^1)^2},\quad G^2_{22}=\frac{1}{4x^2}.$$
The non-zero components $G^h_{ijk}$ of Berwald curvature are given as follows
$$G^1_{111}=  \frac{x^2(y^2)^4 }{ (y^1)^5}, \quad G^1_{112}=-  \frac{x^2(y^2)^3 }{ (y^1)^4},  \quad G^1_{122}=   \frac{x^2(y^2)^2 }{ (y^1)^3},  \quad G^1_{122}= -  \frac{x^2y^2 }{ (y^1)^2}.$$

The non-zero components $R^h_{jk}$ are given by
$$R^1_{12}=-\frac{1}{72} \frac{x^2(y^2)^3(x^1x^2(y^2)^4+10(y^1)^4)}{x^1(y^1)^6}, \quad R^2_{12}=\frac{1}{72}\frac{x^1x^2(y^2)^4+10(y^1)^4}{(x^1)^2(y^1)^3}.$$

Now, the one form $\beta=b_i(x)y^i$ is parallel, where
$$b_1(x)=0,\quad b_2(x)=0,\quad b_3(x)=Const.$$

It is to be noted  that in this example the metrizability freedom is greater than 1.
\end{example}

We end this work by the following theorem which works for both Finslerian and Riemannian manifolds.

 \begin{theorem}
 Let $(M,F)$ be a Finsler manifold. If one coefficient of the geodesic spray or more vanishes, then $(M,F)$ admits a parallel one form.
  \end{theorem}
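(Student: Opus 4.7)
The plan is to reduce the statement to Theorem~\ref{Parallel_Finsler}, whose sufficient conditions $R^\mu_{ij}=0$ and $G^\mu_{ijk}=0$ are, I claim, automatically forced by the vanishing of a single spray coefficient $G^\mu$. Then the parallel one-form can either be produced by invoking that theorem or written down explicitly as a coordinate differential.

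Suppose $G^\mu\equiv 0$ for some index $\mu$. Since the Berwald coefficients are defined as iterated $y$-derivatives of the spray, we immediately get $G^\mu_i=\paa_i G^\mu=0$, $G^\mu_{ij}=\paa_j G^\mu_i=0$ and $G^\mu_{ijk}=\paa_k G^\mu_{ij}=0$, which is the second hypothesis of Theorem~\ref{Parallel_Finsler}. For the first, substitute $G^\mu_j=0$ into formula \eqref{R12}:
$$R^\mu_{jk}=\frac{\delta G^\mu_j}{\delta x^k}-\frac{\delta G^\mu_k}{\delta x^j}=\bigl(\pa_k G^\mu_j-G^h_k\paa_h G^\mu_j\bigr)-\bigl(\pa_j G^\mu_k-G^h_j\paa_h G^\mu_k\bigr)=0,$$
each summand being a derivative (in $x$ or in $y$) of a vanishing function. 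Both sufficient conditions of Theorem~\ref{Parallel_Finsler} therefore hold with this $\mu$, so a non-trivial parallel one-form exists.

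To make things explicit and bypass the machinery, I would simply propose the coordinate one-form $\beta=y^\mu$, i.e.\ $b_i=\delta^\mu_i$. The parallel condition $\pa_j b_i-G^h_{ij}b_h=0$ collapses to $-G^\mu_{ij}=0$, already verified above, so $\beta=dx^\mu$ (viewed as the function $y^\mu$ on $TM$) is itself a parallel one-form on $(M,F)$. If several spray coefficients $G^{\mu_1},\dots,G^{\mu_r}$ vanish, the same argument produces one independent parallel one-form for each vanishing index, yielding an $r$-dimensional family spanned by $dx^{\mu_1},\dots,dx^{\mu_r}$.

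There is essentially no analytic obstacle here; the proof is definition-chasing. The one step that deserves a careful line is the cancellation inside $R^\mu_{jk}$, since the $\delta$-derivative mixes $\pa_k$ with the vertical piece $G^h_k\paa_h$; but both pieces annihilate $G^\mu_j=0$, so the curvature component vanishes identically and everything falls into Theorem~\ref{Parallel_Finsler}.
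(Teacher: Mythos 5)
Your proposal is correct and follows essentially the same route as the paper: both reduce the statement to Theorem~\ref{Parallel_Finsler} by observing that $G^\mu=0$ forces $G^\mu_{ij}=0$, $G^\mu_{ijk}=0$ and $R^\mu_{jk}=0$, and both then exhibit the parallel one-form explicitly as $\beta=c_\mu y^\mu$ with constant coefficients. Your verification of the cancellation in $R^\mu_{jk}$ and the direct check that $b_i=\delta^\mu_i$ satisfies $\pa_j b_i-G^h_{ij}b_h=0$ is slightly more detailed than the paper's, but the argument is the same.
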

  \begin{proof}
  Let $(M,F)$ be a Finsler manifold with the property that  $G^\mu=0$ for some indices $\mu$. The result follows from Theorem 5.3.  In more details, by the formula of $G^i_{jk}$ and \eqref{R12}, we have
  $$G^\mu_{ij}=0, \quad R^\mu_{ij}=0.$$
  Hence, the system
$$\pa_ib_\mu-G^\nu_{ij}b_\nu=0$$
becomes
$$\pa_ib_\mu=0.$$
 That is $b_\mu=c_\mu$ where $c_\mu$ are constant. Moreover the compatibility condition $R^\mu_{ij}b_\mu=0$ is satisfied. Consequently, the one form $\beta=b_\mu y^\mu=c_\mu y^\mu$ is parallel one form.
  \end{proof}

As an application of the above theorem see Examples 1 and 5.


 \providecommand{\bysame}{\leavevmode\hbox
to3em{\hrulefill}\thinspace}
\providecommand{\MR}{\relax\ifhmode\unskip\space\fi MR }
\providecommand{\MRhref}[2]{%
  \href{http://www.ams.org/mathscinet-getitem?mr=#1}{#2}
} \providecommand{\href}[2]{#2}

\end{document}